\documentclass{amsart}
\usepackage{amsfonts}
\usepackage{bbm}
\usepackage{mathrsfs}
\usepackage{amsfonts}
\usepackage{amsmath}
\usepackage{bigdelim}
\usepackage{multirow}
\usepackage{amssymb}
\usepackage{indentfirst}
\usepackage[dvips]{graphicx}
\usepackage{CJK}
\usepackage{fancyhdr}
\usepackage{amscd,amssymb,amsmath,graphicx,verbatim}
\usepackage[TS1,OT1,T1]{fontenc}

\newtheorem{theorem}{Theorem}[section]
\newtheorem{lemma}[theorem]{Lemma}

\newtheorem{proposition}[theorem]{Proposition}
\newtheorem{problem}[theorem]{Problem}
\theoremstyle{definition}
\newtheorem{definition}[theorem]{Definition}
\newtheorem{example}[theorem]{Example}

\theoremstyle{remark}
\newtheorem{remark}[theorem]{Remark}
\numberwithin{equation}{section}

\begin{document}

\title{Entropy of a semigroup of maps from a set-valued view}

\author{Bingzhe Hou}
\address{Bingzhe Hou, College of Mathematics , Jilin university, 130012, Changchun, P. R. China} \email{houbz@jlu.edu.cn}
\author{Xu Wang}
\address{Xu Wang, College of Mathematics , Jilin university, 130012, Changchun, P. R. China} \email{chengjiuyouxiu@sina.com}

\date{}
\subjclass[2000]{37A35, 37B40, 54H20, 37C85.}
\keywords{topological entropy, semigroups, entropy-like invariants, Hausdorff metric.}
\thanks{}
\begin{abstract}
In this paper, we introduce a new entropy-like invariant, named Hausdorff metric entropy, for finitely generated semigroups acting on compact metric spaces from a set-valued view and study its properties.
We establish the relation between Hausdorff metric entropy and topological entropy of a semigroup defined by Bi\'{s}. Some examples with positive or zero  Hausdorff metric entropy are given. Moreover,
some notions of chaos are also well generalized for finitely generated semigroups from a set-valued view.
\end{abstract}
\maketitle

\section{Introduction}

Suppose that$(X,d)$ is a compact metric space. Let $F=\{f_1,f_2,\ldots,f_p\}$ be a $p$-tuple of continuous maps from $X$ to itself, and $G$ be the semigroup generated by $F$. In this paper, we are interested in the dynamical system $(X,F)$(or $(X,G,F)$). In classical discrete
topological dynamics, the concept of topological entropy for a continuous transformation plays an important role. This notion was introduced by Adler, Konheim and McAnderew in \cite{Adler} as an invariant of topological conjugacy. Later, Bowen \cite{Bowen} and
Dinaburg \cite{Dina} gave an equivalent description to this notion on metric space, namely (Bowen's) metric entropy. For a continuous map $f$ on a compact metric space, denote $h(f)$ the topological entropy (or equivalently, metric entropy) of $f$. For finitely generated semigroups acting on compact metric spaces, there had been some entropy-like invariants introduced. For instance, Friedland introduced the entropy of a graph in \cite{Fri}, which is called Friedland entropy now; Bi\'{s} in \cite{Bis} introduced topological entropy of a semigroup, inverse image entropy, preimage relation entropy and point entropy, and established their relations.

The aim of the present paper is to introduce a new entropy-like invariant, named Hausdorff metric entropy, for finitely generated semigroups acting on compact metric spaces from a set-valued view. We will give the definition in next section and study the basic properties of Hausdorff metric entropy in section 3. In section 4, we will establish the relations between Hausdorff metric entropy, topological entropy of a semigroup defined by Bi\'{s} and $\max \{h(f_i); f_i\in F\}$. In fact, two examples are given. One shows that the Hausdorff metric entropy of a tuple $F$ can be positive when each element in $F$ has zero entropy; the other one shows that the Hausdorff metric entropy of a tuple $F$ can be zero when there exists an element in $F$ with positive entropy. At the last section, some remarks and problems are given. In particular, some notions of chaos are also well generalized for finitely generated semigroups.

\section{Definition of Hausdorff metric entropy}
Suppose that$(X,d)$ is a compact metric space. Let $F=\{f_1,f_2,\ldots,f_p\}$ be a $p$-tuple of continuous maps from $X$ to itself, and $G$ be the semigroup generated by $F$.
Then for every $n\geq1$,
$$
F^n=\{g_n\circ g_{n-1}\circ\cdots\circ g_1; g_j\in\{f_1,f_2,\ldots,f_p\}, \ for \ all \ j=1,2,\ldots,n. \}
$$
is a finite set of continuous maps at most $p^n$. Put $F^0=\{id_X\}$, where $id_X$ is the identity map on $X$.
Notice that for any point $x\in X$, $F(x)=\{f_1(x), f_2(x)\ldots,f_n(x)\}$ is a non-empty compact subset of $X$,
we can give a new definition of metric entropy from a set-valued view. First of all, let us review some notions with respect to set-valued spaces.

Let
$$
\mathcal{K}(X)=\{K; K \ is \ a \ non-empty \ compact \ subset \ of \ X \}.
$$
Then the metric $d$ on $X$ induces a metric on $\mathcal{K}(X)$. For any $A,B\in\mathcal{K}(X)$, define
\begin{equation*}
dist(A,B)
=\sup\limits_{x\in A}d(x,B)
=\sup\limits_{x\in A}{ \inf\limits_{y\in B}d(x,y)}.
\end{equation*}
Furthermore, we define the Hausdorff metric $d_H$ by
\begin{equation*}
d_H(A,B)=\max\{dist(A,B), dist(B,A)\} .
\end{equation*}
Moreover, $(\mathcal{K}(X),d_H)$ is a compact metric space. In addition, the topology induced by the Hausdorff metric $d_H$ on $\mathcal{K}(X)$
coincides with the Vietoris topology \cite{KT}. Denote $\mathfrak{J}$ the topology induced by the metric $d$ on $X$. The Vietoris topology $\mathfrak{J}_V$ is generated by the
base $\mathfrak{B}$ consisting of sets of the form
$$
\mathcal{B}(U_{1},U_{2},\ldots,U_{m})=\{K\in\mathcal{K}(X); K\subseteq\bigcup\limits_{i=1}^{m}{U_{i}}, \
and \ K\bigcap U_{i}\neq
\emptyset, \ 1\leq i\leq m.\},
$$
where $U_{1},U_{2},\ldots,U_{m}$ are non-empty open subsets of $X$.

Given any $n\geq0$. For any $x\in X$,
$$
F^n(x)=\{g_n\circ\cdots\circ g_1(x); g_j\in\{f_1,f_2,\ldots,f_p\}, \ for \ all \ j=1,2,\ldots,n.\}\in \mathcal{K}(X).
$$
Then $F^n$ can be seemed as a continuous map from $X$ to $\mathcal{K}(X)$.
In fact, for any subset $A$ in $X$,
$$
F^n(A)=\bigcup\limits_{a\in A}F^n(a),
$$
In particular, if $A\in\mathcal{K}(X)$ we have $F^n(A)\in\mathcal{K}(X)$. Thus, $F$ induced naturally a map $\widetilde{F}: \mathcal{K}(X)\rightarrow\mathcal{K}(X)$ defined by
$$
\widetilde{F}^n(A)=F^n(A), \ \ \ for \ any \ A\in\mathcal{K}(X).
$$
Moreover, $\widetilde{F}^n$ is a continuous map from $(\mathcal{K}(X),d_H)$ to itself.
In addition, there exists a natural isometric embedding $\varphi:X\rightarrow\mathcal{K}(X)$ defined by
$$
\varphi(x)=\{x\}, \ \ for \ every \ x\in X,
$$
since for any $x,y\in X$,
$$
d(x,y)=d_H(\{x\},\{y\}).
$$
Then $F^n=\widetilde{F}^n\circ\varphi$. The following lemma guaranteed the continuity of $\widetilde{F}^n$~($F^n$).
\begin{lemma}
Let $(X,d)$ be a compact metric space and $F=\{f_1,f_2,\ldots,f_p\}$ be a $p$-tuple of continuous maps from $X$ to itself.
Then $\widetilde{F}:\mathcal{K}(X)\rightarrow\mathcal{K}(X)$ is a continuous map.
\end{lemma}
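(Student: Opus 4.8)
The plan is to prove that $\widetilde{F}$ is in fact uniformly continuous, from which ordinary continuity follows. First I would record that $\widetilde{F}$ is well defined as a self-map of $\mathcal{K}(X)$: for $A\in\mathcal{K}(X)$ each image $f_i(A)$ is a continuous image of a compact set, hence nonempty and compact, so $\widetilde{F}(A)=F(A)=\bigcup_{i=1}^{p}f_i(A)$ is a nonempty compact set, being a finite union of such sets. The essential input is the fact that a continuous map on a compact metric space is uniformly continuous. Fix $\varepsilon>0$. For each $i$ choose $\delta_i>0$ so that $d(x,y)<\delta_i$ forces $d(f_i(x),f_i(y))<\varepsilon$, and set $\delta=\min_{1\le i\le p}\delta_i>0$. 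This is precisely where the finiteness of the tuple $F$ is used: it guarantees a single modulus of continuity valid simultaneously for all generators.

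Next I would estimate the Hausdorff distance. Suppose $A,B\in\mathcal{K}(X)$ satisfy $d_H(A,B)<\delta$. To bound $dist(\widetilde{F}(A),\widetilde{F}(B))$, take an arbitrary point $z\in\widetilde{F}(A)$; then $z=f_i(a)$ for some index $i$ and some $a\in A$. Since $dist(A,B)\le d_H(A,B)<\delta$ and $B$ is compact, there is $b\in B$ with $d(a,b)<\delta$, whence $d(f_i(a),f_i(b))<\varepsilon$ by the choice of $\delta$. As $f_i(b)\in\widetilde{F}(B)$, this yields $d(z,\widetilde{F}(B))\le d(f_i(a),f_i(b))<\varepsilon$. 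Since $z\in\widetilde{F}(A)$ was arbitrary, $dist(\widetilde{F}(A),\widetilde{F}(B))\le\varepsilon$, and the symmetric argument gives $dist(\widetilde{F}(B),\widetilde{F}(A))\le\varepsilon$. Hence $d_H(\widetilde{F}(A),\widetilde{F}(B))\le\varepsilon$, proving uniform continuity, and in particular continuity, of $\widetilde{F}$.

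I would then remark that the continuity of each $\widetilde{F}^n$ is an immediate consequence: a direct computation shows $\widetilde{F}^n=(\widetilde{F})^n$, since both send $A$ to $\bigcup(g_n\circ\cdots\circ g_1)(A)$ over all choices $g_j\in F$, so $\widetilde{F}^n$ is a composition of continuous maps. The only mildly delicate point is the estimate in the second paragraph: one must handle the union defining $\widetilde{F}$ correctly, matching a given point $f_i(a)$ of $\widetilde{F}(A)$ with a point $f_i(b)$ of $\widetilde{F}(B)$ coming from the \emph{same} generator $f_i$, and the uniform $\delta$ across all generators is exactly what makes this matching work uniformly in both the point and the index. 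Beyond this routine bookkeeping I do not expect any genuine obstacle.
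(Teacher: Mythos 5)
Your proof is correct, and it takes a genuinely different route from the paper's. The paper argues topologically: it invokes the coincidence of the Hausdorff-metric topology with the Vietoris topology on $\mathcal{K}(X)$ and computes the preimage of a basic Vietoris-open set, asserting the identity $\widetilde{F}^{-1}\mathcal{B}(U_{1},\ldots,U_{m})=\mathcal{B}(F^{-1}(U_{1}),\ldots,F^{-1}(U_{m}))$ with $F^{-1}(U_i)=\bigcup_{j=1}^{p}f_j^{-1}(U_i)$. You instead work directly with the metric: uniform continuity of each $f_i$ on the compact space $X$, a common modulus $\delta=\min_{1\le i\le p}\delta_i$ (which is, as you note, exactly where finiteness of $F$ enters), and the same-generator matching of $f_i(a)$ with $f_i(b)$ to bound both one-sided distances, giving $d_H(\widetilde{F}(A),\widetilde{F}(B))\le\varepsilon$ whenever $d_H(A,B)<\delta$. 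Comparing the two: your argument is self-contained (no facts about the Vietoris topology are needed), it delivers uniform continuity explicitly --- a property the paper invokes later, e.g.\ in proving $h_H(G,F^m)=m\cdot h_H(G,F)$ --- and it sidesteps a quantifier slip in the paper's computation: the condition $\widetilde{F}(K)\subseteq\bigcup_i U_i$ says that $f_j(x)\in\bigcup_i U_i$ for \emph{every} generator $j$ and every $x\in K$, i.e.\ $K\subseteq\bigcap_j f_j^{-1}\bigl(\bigcup_i U_i\bigr)$, whereas $K\subseteq\bigcup_i F^{-1}(U_i)$ only requires this for \emph{some} $j$; for $p\ge 2$ these differ, so the paper's displayed preimage identity can fail (the preimage is still open, being a finite intersection of Vietoris-open sets, so the lemma itself stands, but the one-line identity needs repair). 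The paper's approach, once corrected, has the merit of working in the purely topological category with no metric estimates. Your closing observation that $\widetilde{F}^n=(\widetilde{F})^n$, so continuity of all iterates follows by composition, is also correct and is consistent with how the lemma is used in the paper.
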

\begin{proof}
Given arbitrary element $\mathcal{B}(U_{1},U_{2},\ldots,U_{m})$ in the base $\mathfrak{B}$. It follows from the continuity of $f_1,f_2,\ldots,f_p$ that
for each $1\leq i\leq m$ and each $1\leq j\leq p$, $f_j^{-1}(U_i)$ is a non-empty open subset of $X$ and consequently
$$
F^{-1}(U_i)=\bigcup\limits_{j=1}^pf_j^{-1}(U_i)
$$
is a non-empty open subset of $X$. Thus,
$$
\widetilde{F}^{-1}\mathcal{B}(U_{1},U_{2},\ldots,U_{m})=\mathcal{B}(F^{-1}(U_{1}),F^{-1}(U_{2}),\ldots,F^{-1}(U_{m}))\in \mathfrak{B},
$$
which implies $\widetilde{F}$ is a continuous map.
\end{proof}

Now let us introduce a metric entropy for a finite set of continuous maps from a set-valued view likes the concept of Bowen's metric entropy.
For any $x,y\in X$, denote
$$
d_H^n(x,y)=\max\limits_{0\leq i\leq n}d_H(F^i(x), F^i(y))
$$
For any $n\geq 0$ and any $\epsilon>0$, a subset $M$ in
$X$ is said to be a Hausdorff metric $(n,\epsilon)$-spanning set of $X$ with respect to $F$, if for each $x\in X$, there exists a point $y\in M$ such that
$$
d_H^n(x,y)<\epsilon.
$$
It follows from the compactness of $X$ and the continuity of $F$ that,
$$
r_H(n, \epsilon, X, F)=\min\{ Card(M);M \ \text{is  a  Hausdorff  metric} \ (n,\epsilon)-\text{spanning  set of} \ X\}
$$
is a finite positive integer.
\begin{definition}
Let
$$
h_{H}(G, F)=\lim\limits_{\epsilon\rightarrow0^+}\limsup\limits_{n\rightarrow+\infty}\frac{1}{n}\log{r_H(n, \epsilon, X, F)}.
$$
The quantity $h_{H}(G, F)$ is called the Hausdorff metric entropy of a semigroup $G$ generated by $F$.
\end{definition}
We can also describe the Hausdorff metric entropy of a semigroup $G$ generated by $F$ in terms of Hausdorff metric $(n,\epsilon)$-separated sets. Namely, for any $n\geq 0$ and any $\epsilon>0$, a subset $E$ in
$X$ is said to be a Hausdorff metric $(n,\epsilon)$-separated set of $X$ with respect to $F$, if for any distinct $x,y\in E$,
$$
d_H^n(x,y)\geq\epsilon.
$$
It follows from the compactness of $X$ and the continuity of $F$ that,
$$
s_H(n, \epsilon, X, F)=\max\{Card(E); E \ \text{is  a Hausdorff  metric} \ (n,\epsilon)-\text{separated  set of} \ X\}
$$
is a finite positive integer.
\begin{proposition}
For any semigroup $G$ generated by a finite set $F$, the following equality
$$
\lim\limits_{\epsilon\rightarrow0^+}\limsup\limits_{n\rightarrow+\infty}\frac{1}{n}\log{r_H(n, \epsilon, X, F)}=\lim\limits_{\epsilon\rightarrow0^+}\limsup\limits_{n\rightarrow+\infty}\frac{1}{n}\log{s_H(n, \epsilon, X, F)}
$$
holds.
\end{proposition}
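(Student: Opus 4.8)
The plan is to follow the classical Bowen--Dinaburg comparison between spanning and separated sets, transplanted to the Hausdorff-metric setting. Everything will rest on sandwiching the two counting functions by the chain
$$
s_H(n, 2\epsilon, X, F)\leq r_H(n, \epsilon, X, F)\leq s_H(n, \epsilon, X, F),
$$
after which one applies $\frac{1}{n}\log(\cdot)$, passes to $\limsup_{n\to+\infty}$, and then lets $\epsilon\to0^+$. Before doing any of this I would record the one structural fact the argument needs: $d_H^n$ is itself a metric on $X$, and in particular satisfies the triangle inequality. This follows because for each fixed $i$ the quantity $d_H(F^i(\cdot),F^i(\cdot))$ inherits the triangle inequality from the Hausdorff metric $d_H$, and a finite maximum of functions each obeying the triangle inequality again obeys it.

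For the right-hand inequality I would take a Hausdorff metric $(n,\epsilon)$-separated set $E$ of \emph{maximal} cardinality. Maximality forces $E$ to be $(n,\epsilon)$-spanning: if some $x\in X$ satisfied $d_H^n(x,y)\geq\epsilon$ for every $y\in E$, then $E\cup\{x\}$ would be a strictly larger $(n,\epsilon)$-separated set, contradicting maximality. Hence every maximal separated set is spanning, which gives $r_H(n,\epsilon,X,F)\leq s_H(n,\epsilon,X,F)$.

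For the left-hand inequality I would fix an $(n,2\epsilon)$-separated set $E$ and an arbitrary $(n,\epsilon)$-spanning set $M$, and define a map $\pi\colon E\to M$ by choosing, for each $x\in E$, a point $\pi(x)\in M$ with $d_H^n(x,\pi(x))<\epsilon$. The map $\pi$ is injective: if $\pi(x)=\pi(y)$ for distinct $x,y\in E$, then the triangle inequality for $d_H^n$ would yield $d_H^n(x,y)\leq d_H^n(x,\pi(x))+d_H^n(\pi(y),y)<2\epsilon$, contradicting $2\epsilon$-separation. Injectivity gives $\mathrm{Card}(E)\leq\mathrm{Card}(M)$; taking $E$ of maximal size and $M$ of minimal size yields $s_H(n,2\epsilon,X,F)\leq r_H(n,\epsilon,X,F)$.

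Finally I would combine the two inequalities. Both $r_H$ and $s_H$ are non-increasing in $\epsilon$, so the outer limits in $\epsilon$ exist in $[0,+\infty]$. Applying $\frac{1}{n}\log(\cdot)$, then $\limsup_{n\to+\infty}$, and then $\lim_{\epsilon\to0^+}$ across the whole chain makes the factor-of-two discrepancy in the $\epsilon$-argument disappear, so the two outer expressions coincide and the claimed equality follows. I do not expect a genuine obstacle here; the only point requiring care is the triangle inequality for $d_H^n$ underpinning the injectivity of $\pi$, which is precisely the reason the separated-set side must be taken at scale $2\epsilon$ rather than $\epsilon$.
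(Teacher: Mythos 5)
Your proof is correct and follows essentially the same Bowen--Dinaburg comparison as the paper: maximal separated sets are spanning (giving $r_H\leq s_H$), and an injective map from a separated set into a spanning set at half the scale gives the reverse bound. The only difference is cosmetic --- you parametrize the scales as $(2\epsilon,\epsilon)$ instead of $(\epsilon,\epsilon/2)$ and, usefully, you make explicit the triangle inequality for $d_H^n$ that the paper's injectivity claim silently relies on.
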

\begin{proof}
Let $E$ be a Hausdorff metric $(n,\epsilon)$-separated set of maximal cardinality. Then $E$ is also a Hausdorff metric $(n,\epsilon)$-spanning set of $X$, and consequently
$$
r_H(n, \epsilon, X, F)\leq s_H(n, \epsilon, X, F).
$$

Now let $M$ be a Hausdorff metric $(n,\epsilon/2)$-spanning set of minimal cardinality. Define a map $\varphi: E\rightarrow M$ by choosing for each $x\in E$, some point
$\varphi(x)\in M$ such that
$$
d^n_H(x, \varphi(x))<\frac{\epsilon}{2}.
$$
Then $\varphi$ is injective and hence the cardinality of $E$ is not greater than that of $M$, i.e.,
$$
s_H(n, \epsilon, X, F)\leq r_H(n, \frac{\epsilon}{2}, X, F).
$$
Therefor, by
$$
r_H(n, \epsilon, X, F)\leq s_H(n, \epsilon, X, F)\leq r_H(n, \frac{\epsilon}{2}, X, F),
$$
we have
$$
\lim\limits_{\epsilon\rightarrow0^+}\limsup\limits_{n\rightarrow+\infty}\frac{1}{n}\log{r_H(n, \epsilon, X, F)}=\lim\limits_{\epsilon\rightarrow0^+}\limsup\limits_{n\rightarrow+\infty}\frac{1}{n}\log{s_H(n, \epsilon, X, F)}.
$$
\end{proof}

\section{Fundamental properties of Hausdorff metric entropy}

First of all, one can see when a finite set of continuous transformation contains only one element, the  Hausdorff metric entropy is actually the classical topological entropy.

\begin{proposition}\label{deng}
Let $(X,d)$ be a compact metric space and $G$ be a semigroup generated by $F=\{f\}$. Then $h_H(G, F)=h(f)$.
\end{proposition}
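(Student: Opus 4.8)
The plan is to show that, for a single map, the set-valued Bowen metric $d_H^n$ collapses to the ordinary Bowen metric of $f$, so that the two entropy definitions count the same spanning (equivalently, separated) sets up to a harmless index shift.

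First I would observe that when $F=\{f\}$, the iterated image $F^i(x)=\{f^i(x)\}$ is a singleton for every $x\in X$ and every $i\geq 0$. Since the embedding $\varphi:x\mapsto\{x\}$ is an isometry of $(X,d)$ into $(\mathcal{K}(X),d_H)$, we have $d_H(\{f^i(x)\},\{f^i(y)\})=d(f^i(x),f^i(y))$. Substituting into the definition of the set-valued metric gives
$$
d_H^n(x,y)=\max_{0\leq i\leq n}d_H(F^i(x),F^i(y))=\max_{0\leq i\leq n}d(f^i(x),f^i(y)),
$$
which is precisely the Bowen metric of $f$ computed over the orbit segment of length $n+1$.

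Next I would compare the two families of spanning and separated sets. Because the two metrics agree pointwise, a subset $M\subseteq X$ is a Hausdorff metric $(n,\epsilon)$-spanning set for $F$ if and only if it is a Bowen $(n+1,\epsilon)$-spanning set for $f$, and likewise a subset $E\subseteq X$ is Hausdorff metric $(n,\epsilon)$-separated for $F$ if and only if it is Bowen $(n+1,\epsilon)$-separated for $f$. Hence $r_H(n,\epsilon,X,F)$ equals Bowen's spanning number $r(n+1,\epsilon,f)$, and $s_H(n,\epsilon,X,F)$ equals Bowen's separating number $s(n+1,\epsilon,f)$.

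Finally I would pass to the limit. The only discrepancy is the index shift from $n$ to $n+1$, which does not affect the exponential growth rate, since
$$
\frac{1}{n}\log r(n+1,\epsilon,f)=\frac{n+1}{n}\cdot\frac{1}{n+1}\log r(n+1,\epsilon,f),
$$
and the factor $\frac{n+1}{n}\to 1$. Taking $\limsup_{n\to+\infty}$ and then $\lim_{\epsilon\to0^+}$ therefore recovers exactly the Bowen--Dinaburg formula for $h(f)$, so $h_H(G,F)=h(f)$. The argument is essentially a direct verification; the only point demanding a little care is checking that this off-by-one in the number of iterates is irrelevant in the limit, which is the sole (mild) technical obstacle.
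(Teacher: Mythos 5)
Your proposal is correct and follows essentially the same route as the paper: both rest on the observation that for a single map $F^i(x)=\{f^i(x)\}$ is a singleton, so $d_H(F^i(x),F^i(y))=d(f^i(x),f^i(y))$ and the set-valued Bowen metric $d_H^n$ reduces to the classical Bowen metric, making the spanning and separated counts coincide. Your additional care about the off-by-one indexing (which vanishes in the limit) is a fine touch of rigor that the paper's terser proof leaves implicit, but it is not a different argument.
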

\begin{proof}
For any $x\in X$ and any $n>0$, we have $F^n(x)=\{f^n(x)\}$. Then for any $x,y\in X$,
$$
d(F^n(x),F^n(y))=d_H(\{f^n(x)\},\{f^n(y)\})=d(f^n(x), f^n(y)).
$$
Thus, $h_H(G, F)=h(f)$.
\end{proof}

Next, similar to the research of the topological entropy of a single map, we will study the basic properties of the Hausdorff metric entropy for finitely generated semigroups.

\begin{proposition}
Let $(X,d)$ be a compact metric space and $G$ be a semigroup generated by a finite set $F=\{f_1,f_2,\ldots,f_p\}$. Then $h_H(G, F)\geq0$.
\end{proposition}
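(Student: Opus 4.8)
The plan is to reduce the claim to the elementary observation that a spanning set of a non-empty space must itself be non-empty, so that the cardinalities entering the definition never fall below $1$; non-negativity of the entropy then follows formally from the monotonicity of the logarithm and the preservation of inequalities under limits.

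First I would fix $n \geq 0$ and $\epsilon > 0$ and examine $r_H(n, \epsilon, X, F)$. Since $X$ is a non-empty compact metric space, it contains at least one point $x_0$, and by definition any Hausdorff metric $(n,\epsilon)$-spanning set $M$ must contain some $y$ with $d_H^n(x_0, y) < \epsilon$; in particular $M \neq \emptyset$. Hence every such $M$ satisfies $\mathrm{Card}(M) \geq 1$, and taking the minimum over all spanning sets gives $r_H(n, \epsilon, X, F) \geq 1$.

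The second step is purely a matter of passing to limits. From $r_H(n, \epsilon, X, F) \geq 1$ we obtain
$$
\frac{1}{n}\log r_H(n, \epsilon, X, F) \geq \frac{1}{n}\log 1 = 0
$$
for every $n$ and every $\epsilon$. Taking $\limsup_{n \to +\infty}$ preserves this bound, since the limit superior of a sequence bounded below by $0$ is itself at least $0$; the subsequent outer limit $\lim_{\epsilon \to 0^+}$ again preserves non-negativity, and therefore $h_H(G, F) \geq 0$.

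There is essentially no genuine obstacle here: the inequality is a direct consequence of the definition together with the non-emptiness of $X$. The only point deserving a word of care is the existence of the outer limit in $\epsilon$, which is guaranteed because $r_H(n, \epsilon, X, F)$ is non-increasing in $\epsilon$, so that $\limsup_{n}\frac{1}{n}\log r_H(n, \epsilon, X, F)$ is monotone in $\epsilon$ and the limit as $\epsilon \to 0^+$ exists (possibly as $+\infty$). The non-negativity claim itself, however, requires only the termwise bound established above.
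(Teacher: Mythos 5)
Your proof is correct and follows essentially the same route as the paper, which simply notes that $r_H(n,\epsilon,X,F)$ is a positive integer, hence at least $1$, so every term $\frac{1}{n}\log r_H(n,\epsilon,X,F)$ is non-negative and the bound survives the limits. Your version merely spells out the non-emptiness of spanning sets and the monotonicity in $\epsilon$, which the paper leaves implicit.
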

\begin{proof}
It follows from $r_H(n,\epsilon,X,F)>0$ that
$$
h_{H}(G, F)=\lim\limits_{\epsilon\rightarrow0^+}\limsup\limits_{n\rightarrow+\infty}\frac{1}{n}\log{r_H(n, \epsilon, X, F)}\geq0.
$$
\end{proof}

\begin{proposition}
For every positive integer $m$, $h_H(G, F^m)=m\cdot h_H(G, F).$
\end{proposition}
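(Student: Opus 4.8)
The plan is to imitate the classical power rule $h(f^{m})=m\,h(f)$, working throughout with the spanning numbers $r_H$. The starting point is the bookkeeping identity $(F^m)^{i}=F^{mi}$, which holds because every length-$mi$ composition splits into $i$ blocks of length $m$; more precisely, for $0\le j<m$ one has $F^{mk+j}=\widetilde F^{\,j}\circ F^{mk}$ as maps $X\to\mathcal K(X)$. Consequently the refined metric attached to the tuple $F^m$ is just the $F$-metric sampled along multiples of $m$:
$$
\max_{0\le k\le n} d_H\bigl(F^{mk}(x),F^{mk}(y)\bigr)\ \le\ d_H^{mn}(x,y).
$$
From this, every Hausdorff metric $(mn,\epsilon)$-spanning set for $F$ is automatically a $(n,\epsilon)$-spanning set for $F^m$, so $r_H(n,\epsilon,X,F^m)\le r_H(mn,\epsilon,X,F)$. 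Dividing by $n$, taking $\limsup_{n}$ and then $\epsilon\to0^{+}$, and using that a $\limsup$ along the subsequence $N=mn$ never exceeds the full $\limsup$, I obtain the easy inequality $h_H(G,F^m)\le m\,h_H(G,F)$.

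For the reverse inequality the difficulty is to recover control at the intermediate times $i$ that are not multiples of $m$. Here I would exploit that each induced map $\widetilde F^{\,j}\colon\mathcal K(X)\to\mathcal K(X)$ with $0\le j<m$ is continuous on the compact space $\mathcal K(X)$, hence uniformly continuous: given $\epsilon>0$ there is $\delta\in(0,\epsilon]$ with $d_H(A,B)<\delta\Rightarrow d_H(\widetilde F^{\,j}(A),\widetilde F^{\,j}(B))<\epsilon$ for all $0\le j<m$ simultaneously. If $x,y$ satisfy $\max_{0\le k\le n}d_H\bigl(F^{mk}(x),F^{mk}(y)\bigr)<\delta$, then writing an arbitrary $0\le i\le mn$ as $i=mk+j$ and using $F^{i}=\widetilde F^{\,j}\circ F^{mk}$ gives $d_H(F^i(x),F^i(y))<\epsilon$ for every such $i$, i.e. $d_H^{mn}(x,y)<\epsilon$. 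Hence every $(n,\delta)$-spanning set for $F^m$ is an $(mn,\epsilon)$-spanning set for $F$, so $r_H(mn,\epsilon,X,F)\le r_H(n,\delta,X,F^m)$.

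It then remains to pass from the subsequence $N=mn$ back to all $N$. For a general $N$ I set $n=\lceil N/m\rceil$; since $r_H(\cdot,\epsilon,X,F)$ is monotone non-decreasing in the time parameter (a $(N',\epsilon)$-spanning set is a fortiori $(N,\epsilon)$-spanning when $N\le N'$), I get $r_H(N,\epsilon,X,F)\le r_H(mn,\epsilon,X,F)\le r_H(n,\delta,X,F^m)$. Because $\lceil N/m\rceil/N\to 1/m$ and $\tfrac1n\log r_H(n,\delta,X,F^m)\ge 0$, dividing by $N$ and taking $\limsup_{N}$ yields
$$
\limsup_{N\to\infty}\tfrac1N\log r_H(N,\epsilon,X,F)\ \le\ \tfrac1m\,\limsup_{n\to\infty}\tfrac1n\log r_H(n,\delta,X,F^m),
$$
and letting $\epsilon\to0^{+}$ (whence $\delta\to0^{+}$) gives $h_H(G,F)\le\tfrac1m h_H(G,F^m)$; combined with the first inequality this is the claim. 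The main obstacle is exactly this reverse direction: selecting $\delta$ uniformly over the finitely many intermediate maps $\widetilde F^{0},\dots,\widetilde F^{m-1}$ so that closeness of the sampled orbits forces closeness of the entire length-$mn$ orbit, together with the limsup bookkeeping — via monotonicity and the nonnegativity of $\tfrac1n\log r_H$ — needed to replace the subsequence $\{mn\}$ by all indices $N$.
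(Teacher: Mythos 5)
Your proof is correct and follows essentially the same route as the paper: the easy inequality comes from observing that an $(mn,\epsilon)$-spanning set for $F$ is an $(n,\epsilon)$-spanning set for $F^m$, and the reverse comes from uniform continuity, converting an $(n,\delta)$-spanning set for $F^m$ into an $(mn,\epsilon)$-spanning set for $F$. You are in fact more careful than the paper on two points it glosses over: you state the uniform continuity at the level of the induced maps $\widetilde{F}^{\,j}$ on $(\mathcal{K}(X),d_H)$, $0\le j<m$ (the paper phrases it for points $x,y\in X$, though what is actually needed is that closeness of the compact sets $F^{mk}(x)$ and $F^{mk}(y)$ forces closeness at intermediate times), and you supply the monotonicity and nonnegativity bookkeeping required to pass between the $\limsup$ along the subsequence $\{mn\}$ and the full $\limsup$, which the paper omits.
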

\begin{proof}
Firstly, one can see that $G$ is also the semigroup generated by $F^m$.
Given any $m>0$, $n>0$ and $\epsilon>0$. A Hausdorff metric $(mn,\epsilon)$-spanning set of $X$ with respect to $F$  must be a Hausdorff metric $(n,\epsilon)$-spanning set of $X$ with respect to $F^m$, then
$$
r_H(n,\epsilon,X,F^m)\leq r_{H}(mn,\epsilon,X,F).
$$
Consequently,
$$
\frac{1}{n}\log r_H(n,\epsilon,X,F^m)\leq \frac{m}{mn}\log r_{H}(mn,\epsilon,X,F).
$$
Thus,
$$
h_H(G, F^m)\leq m\cdot h_H(G, F).
$$

On the other hand, it follows from the uniform continuity of $F$(~$\widetilde{F}$~) that for any $\epsilon>0$, there exists $\delta>0$ such that if $d(x,y)<\delta$, $d_H^m(x,y)<\epsilon$.
Then a Hausdorff metric $(n,\delta)$-spanning set of $X$ with respect to $F^m$ must be a Hausdorff metric $(mn,\epsilon)$-spanning set of $X$ with respect to $F$,  and consequently
$$
r_H(n,\delta,X,F^m)\geq r_{H}(mn,\epsilon,X,F).
$$
Thus,
$$
h_H(G, F^m)\geq m\cdot h_H(G, F).
$$
\end{proof}

\begin{definition}
Let $(X,d)$ be a compact metric space and $G$ be a semigroup generated by a finite set $F=\{f_1,f_2,\ldots,f_p\}$.
$Y\subseteq X$ is called an invariant subset of $X$, if for any $y\in Y$, $F(y)\subseteq Y$, i.e., $F(Y)\subseteq Y$.
Moreover, if $Y\subseteq X$ is an invariant closed subset of $X$, we call $(Y,F|_Y)$ is a subsystem of $(X,F)$.
\end{definition}

\begin{proposition}
Let $(X,d)$ be a compact metric space and $G$ be a semigroup generated by a finite set $F=\{f_1,f_2,\ldots,f_p\}$.
If $Y\subseteq X$ is an invariant closed subset of $X$, then $h_H(G|_Y, F|_Y)\leq h_H(G, F)$.
\end{proposition}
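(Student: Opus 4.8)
The plan is to use the separated-set characterization of Hausdorff metric entropy established in the previous Proposition, together with the simple observation that a separated set contained in $Y$ automatically remains separated when viewed inside $X$. Using separated sets rather than spanning sets is the natural choice here, because the inclusion $Y \subseteq X$ runs in the convenient direction for separated sets.

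First I would record the setup. Since $Y$ is closed in the compact space $X$, it is itself compact, and since $F(Y)\subseteq Y$ each $f_j$ restricts to a continuous self-map $f_j|_Y$ of $Y$; hence $F|_Y=\{f_1|_Y,\ldots,f_p|_Y\}$ is a $p$-tuple of continuous maps on the compact metric space $(Y,d|_Y)$, and $h_H(G|_Y,F|_Y)$ is defined. The crucial point is then to show that for any two points $x,y\in Y$ the quantity $d_H^n(x,y)$ is the same whether computed in the subsystem $(Y,F|_Y)$ or in $(X,F)$. This rests on two compatibility facts. The first is that the orbit sets coincide: for $x\in Y$ and every $i\geq 0$, invariance gives $(F|_Y)^i(x)=F^i(x)$, since every composition $g_i\circ\cdots\circ g_1(x)$ with $g_j\in F$ keeps all intermediate images inside $Y$ and therefore agrees with the corresponding composition of the restricted maps. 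The second is that the Hausdorff metric is compatible with restriction: for compact subsets $A,B\subseteq Y$, the value $\inf_{b\in B}d(a,b)$ is unchanged whether $B$ is regarded as a subset of $Y$ or of $X$, so the Hausdorff metric $d_H$ on $\mathcal{K}(Y)$ is exactly the restriction to $\mathcal{K}(Y)$ of the Hausdorff metric on $\mathcal{K}(X)$. Combining the two, $d_H(F^i(x),F^i(y))$, and hence its maximum over $0\leq i\leq n$, takes the same value in both systems.

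Consequently, any Hausdorff metric $(n,\epsilon)$-separated set $E\subseteq Y$ with respect to $F|_Y$ is also a Hausdorff metric $(n,\epsilon)$-separated set of $X$ with respect to $F$, which yields $s_H(n,\epsilon,Y,F|_Y)\leq s_H(n,\epsilon,X,F)$ for all $n$ and all $\epsilon>0$. Applying $\frac{1}{n}\log(\cdot)$, then $\limsup_{n\to+\infty}$, and finally $\lim_{\epsilon\to 0^+}$ preserves the inequality, and by the separated-set description of Hausdorff metric entropy we conclude $h_H(G|_Y,F|_Y)\leq h_H(G,F)$.

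I expect the only genuinely delicate step to be the verification of the two compatibility facts, namely that neither the orbit sets $F^i(x)$ nor the Hausdorff distances $d_H$ change when one passes to the subspace $Y$; the first uses invariance in an essential way and the second uses that $Y$ carries the restricted metric. Once these are in place, the entropy inequality follows immediately and formally from the monotonicity of $s_H$.
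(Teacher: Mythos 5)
Your proposal is correct and follows essentially the same route as the paper's own proof: both use the separated-set characterization and the observation that a Hausdorff metric $(n,\epsilon)$-separated set of $Y$ with respect to $F|_Y$ is automatically such a set of $X$ with respect to $F$, giving $s_H(n,\epsilon,Y,F|_Y)\leq s_H(n,\epsilon,X,F)$ and hence the entropy inequality. Your explicit verification of the two compatibility facts (that invariance gives $(F|_Y)^i(x)=F^i(x)$ and that $d_H$ is unchanged under restriction to $\mathcal{K}(Y)$) is a welcome addition that the paper leaves implicit.
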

\begin{proof}
Given any $n>0$ and $\epsilon>0$. A Hausdorff metric $(n,\epsilon)$-separated set of $Y$ with respect to $F|_Y$  must be a Hausdorff metric $(n,\epsilon)$-separated set of $X$ with respect to $F$, then
$$
s_H(n,\epsilon,Y,F|_Y)\leq s_{H}(n,\epsilon,X,F).
$$
Consequently,
$$
\frac{1}{n}\log s_H(n,\epsilon,Y,F|_Y)\leq \frac{1}{n}\log s_{H}(n,\epsilon,X,F).
$$
Thus,
$$
h_H(G|_Y, F|_Y)\leq h_H(G, F).
$$
\end{proof}

\begin{definition}
Let $(X,d)$ and $(Y,\rho)$ be two compact metric spaces. Let $G$ and $G'$ be two semigroups generated by finite set $F=\{f_1,f_2,\ldots,f_p\}$ and finite set $F'=\{f'_1,f'_2,\ldots,f'_q\}$ respectively.
If there exists a continuous surjective $T:X\rightarrow Y$ such that for any $x\in X$,
$$
\{Tf_1(x),Tf_{2}(x)\ldots,Tf_p(x)\}=\{f'_1T(x),f'_{2}T(x)\ldots,f'_pT(x)\}, \  i.e. \ TF(x)=F'(Tx),
$$
then $T$ is called a topological semiconjugacy from $(X,F)$ to $(Y,F')$. Moreover, if $T$ is a homeomorphism, we call $T$ a topological conjugacy from $(X,F)$ to $(Y,F')$.
\end{definition}

\begin{theorem}
Let $(X,d)$ and $(Y,\rho)$ be two compact metric spaces. Let $G$ and $G'$ be two semigroups generated by finite set $F=\{f_1,f_2,\ldots,f_p\}$ and finite set $F'=\{f'_1,f'_2,\ldots,f'_q\}$ respectively.
If there exists a topological semiconjugacy $T$ from $(X,F)$ to $(Y,F')$,  then $h_H(G, F)\geq h_H(G', F')$.
Moreover, $T$ is a topological conjugacy, then $h_H(G, F)=h_H(G', F')$.
\end{theorem}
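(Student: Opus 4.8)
The plan is to reduce everything to a single commutation identity, $T\circ F^n=(F')^n\circ T$ at the level of set-valued maps, and then run the standard Bowen-style spanning-set comparison transported to the Hausdorff metric. First I would promote the defining relation $TF(x)=F'(Tx)$ to all iterates by induction on $n$. Granting $TF^{n-1}(x)=(F')^{n-1}(Tx)$, I write $F^n(x)=\bigcup_{w\in F^{n-1}(x)}F(w)$, apply $T$ to obtain $TF^n(x)=\bigcup_{w\in F^{n-1}(x)}F'(Tw)$, and observe that since $F'(Tw)$ depends on $w$ only through $Tw$, the union over branches $w$ reindexes by the image set $\{Tw\}=TF^{n-1}(x)$, giving $\bigcup_{v\in(F')^{n-1}(Tx)}F'(v)=(F')^n(Tx)$. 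This reindexing step is exactly where the semiconjugacy hypothesis is genuinely used, and it is the one place I would write out with care; it is also the main obstacle, since once the identity holds the remaining steps are routine.

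Next I would transfer closeness across $T$. Because $X$ is compact, $T$ is uniformly continuous, and I claim the induced set map $A\mapsto T(A)$ is uniformly continuous from $(\mathcal{K}(X),d_H)$ to $(\mathcal{K}(Y),\rho_H)$: given $\epsilon>0$ choose $\delta>0$ with $d(x,y)<\delta\Rightarrow\rho(Tx,Ty)<\epsilon$; then if $d_H(A,B)<\delta$ every point of $T(A)$ lies within $\epsilon$ of $T(B)$ and conversely, so $\rho_H(T(A),T(B))\le\epsilon$. Combining this with the commutation identity yields the key contraction estimate: writing $\rho_H^n$ for the analogue of $d_H^n$ on $Y$ with respect to $F'$, if $d_H^n(x,y)<\delta$ then for every $0\le i\le n$ we have $\rho_H((F')^i(Tx),(F')^i(Ty))=\rho_H(TF^i(x),TF^i(y))\le\epsilon$, hence $\rho_H^n(Tx,Ty)\le\epsilon$ (choosing $\delta$ to correspond to $\epsilon/2$ makes this strict).

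Then I would run the spanning-set count. Let $M$ be an $(n,\delta)$-spanning set of $X$ with respect to $F$ of minimal cardinality. Using surjectivity of $T$, for any $y=Tx\in Y$ pick $z\in M$ with $d_H^n(x,z)<\delta$; the estimate above gives $\rho_H^n(y,Tz)<\epsilon$, so $T(M)$ is an $(n,\epsilon)$-spanning set of $Y$ with respect to $F'$. Hence $r_H(n,\epsilon,Y,F')\le\operatorname{Card}T(M)\le\operatorname{Card}M=r_H(n,\delta,X,F)$. Applying $\limsup_{n}\frac1n\log(\cdot)$ and then letting $\epsilon\to0^+$ (so that $\delta\to0^+$) gives $h_H(G',F')\le h_H(G,F)$.

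Finally, for the conjugacy case I would check that $T^{-1}$ is itself a semiconjugacy from $(Y,F')$ to $(X,F)$: applying $T^{-1}$ to $TF(T^{-1}y)=F'(y)$ yields $F(T^{-1}y)=T^{-1}F'(y)$, i.e.\ $T^{-1}F'(y)=F(T^{-1}y)$, which is the required relation. Invoking the inequality just proved with the roles of the two systems interchanged gives $h_H(G,F)\le h_H(G',F')$, and equality follows at once.
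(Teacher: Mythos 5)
Your proof is correct and follows essentially the same route as the paper's: transport a Hausdorff metric $(n,\delta)$-spanning set of $X$ through $T$ via uniform continuity to obtain an $(n,\epsilon)$-spanning set of $Y$, compare cardinalities, and then handle the conjugacy case by applying the inequality to $T^{-1}$. The only difference is that you make explicit two steps the paper leaves implicit --- the induction giving $T F^n(x)=(F')^n(Tx)$ and the uniform continuity of the induced map on $(\mathcal{K}(X),d_H)$ --- which strengthens rather than changes the argument.
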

\begin{proof}
For any $n>0$, it follows from the uniform continuity of $T$ that for any $\epsilon>0$,
there exsits $\delta>0$ such that if $d_H^n(x,y)<\delta$, $d_H^n(Tx,Ty)<\epsilon$.
Then, if $M$ is a Hausdorff metric $(n,\delta)$-spanning set of $X$ with respect to $F$, $T(M)$ must be a Hausdorff metric $(n,\epsilon)$-spanning set of $Y$ with respect to $F'$,  and consequently
$$
r_H(n,\delta,X,F)\geq r_{H}(n,\epsilon,Y,F').
$$
Thus,
$$
h_H(G, F)\geq h_H(G', F').
$$

Notice that if $T$ is a topological conjugacy from $(X,F)$ to $(Y,F')$, $T^{-1}$ is a topological conjugacy from $(Y,F')$ to $(X,F)$. Therefor,
$$h_H(G, F)= h_H(G', F')$$
\end{proof}

Let $(X,d)$ and $(Y,\rho)$ be two compact metric spaces. Let $G$ and $G'$ be two semigroups generated by finite sets $F=\{f_1,f_2,\ldots,f_p\}$ and $F'=\{f'_1,f'_2,\ldots,f'_q\}$ respectively.
Now consider the Cartesian product space $X\times Y$ with metric $\gamma$ defined by, for any $(x,y), (u,v)\in X\times Y$,
$$
\gamma((x,y), (u,v))=\max\{d(x,u), \rho(y,v)\}.
$$
Denote
$$
F\times F'=\{g\times g'; g\in F, g\in F' \}.
$$
Then $F\times F'$ is a finite set of continuous maps on $X\times Y$, and it is easy to see for any $n>0$, $(F\times G)^n=F^n\times G^n$. Moreover, denote $G\times G'$ the semigroup generated by $F\times F'$ and
$h_H(G\times G',F\times F')$ the Hausdorff metric entropy of Cartesian product system $(X\times Y,F\times F')$.

\begin{lemma}
For any $A\times B, C\times D\in \mathcal{K}(X)\times \mathcal{K}(Y)\subseteq\mathcal{K}(X\times Y)$, we have
$$
\gamma_H(A\times B, C\times D)=\max\{d_H(A,C), \rho_H(B,D)\}.
$$
\end{lemma}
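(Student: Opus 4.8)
The plan is to reduce the statement to a computation of the one-sided (directed) distance $dist(A\times B, C\times D)$ and then invoke symmetry. Recall that by definition $\gamma_H(A\times B, C\times D)=\max\{dist(A\times B, C\times D),\ dist(C\times D, A\times B)\}$, where $dist(S,T)=\sup_{s\in S}\inf_{t\in T}\gamma(s,t)$. So it suffices to establish the single identity
\[
dist(A\times B, C\times D)=\max\{dist(A,C),\ dist(B,D)\},
\]
the first directed distance on the right being computed with $d$ and the second with $\rho$. Applying the same identity with the roles of the two sets exchanged, and then combining the two halves via the elementary rule $\max\{\max\{a,b\},\max\{c,d\}\}=\max\{\max\{a,c\},\max\{b,d\}\}$, will regroup the four directed distances into $\max\{d_H(A,C),\rho_H(B,D)\}$, which is the desired conclusion.

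The heart of the argument is two observations about interchanging $\max$ with $\inf$ and with $\sup$ when the two arguments range over \emph{independent} index sets. Fix $(x,y)\in A\times B$. Since $u$ ranges over $C$ and $v$ over $D$ independently, and $\gamma((x,y),(u,v))=\max\{d(x,u),\rho(y,v)\}$, I would first show
\[
\inf_{(u,v)\in C\times D}\max\{d(x,u),\rho(y,v)\}=\max\Bigl\{\inf_{u\in C}d(x,u),\ \inf_{v\in D}\rho(y,v)\Bigr\}=\max\{d(x,C),\rho(y,D)\}.
\]
The inequality $\geq$ is immediate, since $\max\{d(x,u),\rho(y,v)\}$ dominates $\max\{d(x,C),\rho(y,D)\}$ for every choice of $u,v$; for $\leq$, given $\eta>0$ I would pick $u$ and $v$ realizing the respective infima up to $\eta$ and note that the $\max$ of the two values is then within $\eta$ of the right-hand side. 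Taking the supremum over $(x,y)\in A\times B$ and using the analogous (and easier) identity $\sup_{(x,y)}\max\{a(x),b(y)\}=\max\{\sup_x a(x),\sup_y b(y)\}$ for independent variables yields
\[
dist(A\times B, C\times D)=\sup_{(x,y)\in A\times B}\max\{d(x,C),\rho(y,D)\}=\max\{dist(A,C),\ dist(B,D)\},
\]
which is exactly the reduction claimed in the first paragraph.

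The only point requiring genuine care — the main, and rather modest, obstacle — is the interchange of $\inf$ with $\max$ over the product index set $C\times D$; the sup-side interchange and the final regrouping of nested maxima are purely formal. Compactness of $A,B,C,D$ is not strictly needed for these identities (the inf/sup arguments go through with approximating choices), but it does guarantee that $A\times B$ and $C\times D$ lie in $\mathcal{K}(X\times Y)$, so that all four directed distances are finite and attained and $\gamma_H$ is a genuine metric value. Once the directed identity is in hand, the symmetric statement and the concluding rearrangement complete the proof with no further estimates.
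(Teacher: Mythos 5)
Your proposal is correct and follows essentially the same route as the paper: both reduce the claim to the identity that the distance from a point (or set) to a product set splits as the maximum of the coordinate distances, then take suprema and regroup the four directed distances into the two Hausdorff metrics. The only cosmetic difference is that you realize the infima up to an arbitrary $\eta>0$, whereas the paper uses compactness to attain them exactly; both are valid.
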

\begin{proof}
Firstly, we will prove for any $(x,y)\in X\times Y$ and any $E\times K\in \mathcal{K}(X)\times \mathcal{K}(Y)$,
$$
\gamma((x,y), E\times K)=\max\{d(x,E), \rho(y,K)\}.
$$
One hand, since $E\times K$ is a compact subset of $X\times Y$, there exists a point $(e,k)\in E\times K$ such that
$$
\gamma((x,y), E\times K)=\gamma((x,y), (e,k))=\max\{d(x,e), \rho(y,k)\}\geq \max\{d(x,E), \rho(y,K)\}.
$$
On the other hand , since $E$ and $K$ are compact subsets in $X$ and $Y$ respectively, there exist $e'\in E$ and $k'\in K$ such that
$$
d(x,E)=d(x,e'), \ \ \  \ \rho(y,K)=\rho(y,k').
$$
Then
\begin{align*}
\max\{d(x,E), \rho(y,K)\}&=\max\{d(x,e'), \rho(y,k')\} \\
&=\gamma((x,y), (e',k')) \\
&\geq\gamma((x,y), E\times K).
\end{align*}
Therefor,
$$
\gamma((x,y), (E,K))=\max\{d(x,E), \rho(y,K)\}
$$
and consequently,
\begin{align*}
&\gamma_H(A\times B, C\times D)  \\
=&\max\{\sup\limits_{(a,b)\in A\times B}\gamma((a,b),C\times D), \sup\limits_{(c,d)\in C\times D}\gamma((c,d),A\times B)\} \\
=&\max\{\sup\limits_{(a,b)\in A\times B}\max\{d(a,C), \rho(b,D)\}, \sup\limits_{(c,d)\in C\times D}\max\{d(c,A), \rho(d,B)\}\} \\
=&\max\{dist_d(A,C), dist_{\rho}(B,D), dist_{d}(C,A),dist_{\rho}(D,B)\} \\
=&\max\{d_H(A,C), \rho_H(B,D)\}.
\end{align*}
\end{proof}

\begin{theorem}
The equality
$$
h_H(G\times G',F\times F')=h_H(G, F)+h_H(G', F')
$$
holds.
\end{theorem}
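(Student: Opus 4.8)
The heart of the matter is a single metric identity that collapses the Bowen-type metric on the product system into the maximum of the two factor metrics. The plan is to observe first that, by the product structure of the iterates noted before the statement,
$$(F\times F')^i(x,y)=F^i(x)\times (F')^i(y)\in\mathcal{K}(X)\times\mathcal{K}(Y)\subseteq\mathcal{K}(X\times Y)$$
for every $i\ge 0$ and every $(x,y)\in X\times Y$. Applying the preceding lemma with $A=F^i(x)$, $B=(F')^i(y)$, $C=F^i(u)$, $D=(F')^i(v)$ yields
$$\gamma_H\big((F\times F')^i(x,y),(F\times F')^i(u,v)\big)=\max\{d_H(F^i(x),F^i(u)),\rho_H((F')^i(y),(F')^i(v))\}.$$
Taking the maximum over $0\le i\le n$ and interchanging the two maxima then gives the key identity
$$\gamma_H^n\big((x,y),(u,v)\big)=\max\{d_H^n(x,u),\rho_H^n(y,v)\}.$$
Everything else is bookkeeping built on this.

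For the inequality $h_H(G\times G',F\times F')\le h_H(G,F)+h_H(G',F')$ I would use spanning sets. If $M$ is a Hausdorff metric $(n,\epsilon)$-spanning set of $X$ with respect to $F$ and $N$ is one of $Y$ with respect to $F'$, then for any $(x,y)$ one may choose $u\in M$, $v\in N$ with $d_H^n(x,u)<\epsilon$ and $\rho_H^n(y,v)<\epsilon$; by the key identity $\gamma_H^n((x,y),(u,v))<\epsilon$, so $M\times N$ is $(n,\epsilon)$-spanning for $F\times F'$. Hence $r_H(n,\epsilon,X\times Y,F\times F')\le r_H(n,\epsilon,X,F)\,r_H(n,\epsilon,Y,F')$. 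Dividing $\log$ by $n$, using the subadditivity of $\limsup$, and letting $\epsilon\to 0^+$ then delivers the upper bound.

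For the reverse inequality I would use separated sets. If $E\subseteq X$ and $E'\subseteq Y$ are Hausdorff metric $(n,\epsilon)$-separated with respect to $F$ and $F'$, then for distinct $(x,y),(u,v)\in E\times E'$ at least one coordinate differs, so the key identity forces $\gamma_H^n((x,y),(u,v))\ge\epsilon$; thus $E\times E'$ is $(n,\epsilon)$-separated for $F\times F'$ and $s_H(n,\epsilon,X\times Y,F\times F')\ge s_H(n,\epsilon,X,F)\,s_H(n,\epsilon,Y,F')$. The main obstacle surfaces precisely here: after taking $\frac{1}{n}\log$ this reads $c_n\ge a_n+b_n$ for the three normalized logarithms, and since $\limsup$ is subadditive but not superadditive one cannot simply conclude $\limsup c_n\ge\limsup a_n+\limsup b_n$. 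Passing to a subsequence that realizes $\limsup a_n$ controls $b_n$ only up to its limit along that subsequence and so does not by itself close the gap.

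The correct resolution is to establish that, for each fixed $\epsilon$, the limit $\lim_n\frac{1}{n}\log s_H(n,\epsilon,\cdot)$ exists for at least one factor; for instance via a submultiplicativity estimate for the separated (or spanning) counting function, analogous to the subadditivity of the open-cover counting function in the classical theory. Granting this, $\limsup(a_n+b_n)=\limsup a_n+\lim b_n$ supplies the missing inequality. This limit-existence step is the part of the argument deserving the most care; once it is in hand, letting $\epsilon\to 0^+$ and invoking the earlier proposition that equates the spanning and separated formulations of $h_H$ completes the proof.
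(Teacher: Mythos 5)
Your route is the same as the paper's: the identical key identity $\gamma_H^n((x,y),(u,v))=\max\{d_H^n(x,u),\rho_H^n(y,v)\}$ derived from the preceding lemma, products of spanning sets for the inequality $h_H(G\times G',F\times F')\leq h_H(G,F)+h_H(G',F')$, and products of separated sets for the reverse inequality. Where you depart from the paper is in your last paragraph: you observe that the counting inequality $s_H(n,\epsilon,X\times Y,F\times F')\geq s_H(n,\epsilon,X,F)\cdot s_H(n,\epsilon,Y,F')$ does not by itself yield the entropy inequality, because from $c_n\geq a_n+b_n$ one only gets $\limsup_n c_n\geq \limsup_n a_n+\liminf_n b_n$; the $\limsup$ is subadditive but not superadditive. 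This diagnosis is correct, and it is worth noting that the paper's own proof is silent on exactly this point: it passes from the counting inequality directly to $h_H(G\times G',F\times F')\geq h_H(G,F)+h_H(G',F')$ with no justification, so the gap you identified is present in the published argument as well.

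However, your proposed repair is only a sketch, and the claim it rests on is precisely the hard part. You ask for a submultiplicativity estimate giving existence of $\lim_n\frac{1}{n}\log s_H(n,\epsilon,\cdot)$ for at least one factor, ``analogous to the subadditivity of the open-cover counting function.'' In the classical single-map theory that argument works because $\mathcal{N}(\alpha_0^{n+m-1})\leq \mathcal{N}(\alpha_0^{n-1})\,\mathcal{N}(\alpha_0^{m-1})$ uses the fact that $f^{-n}$ pulls an open cover of the whole ($f$-invariant) space back to an open cover of the whole space, and then Fekete's lemma applies. Here $h_H(G,F)$ is the Bowen-type entropy of the induced map $\widetilde{F}$ on $\mathcal{K}(X)$ computed along the subset $\varphi(X)=\{\{x\};x\in X\}$, and this subset is \emph{not} $\widetilde{F}$-invariant: $\widetilde{F}^n(\varphi(X))$ consists of the sets $F^n(x)$, which leave $\varphi(X)$ as soon as $F^n(x)$ has more than one point. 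Consequently the Fekete-type subadditivity does not go through, no submultiplicativity inequality of the required form is available (none is stated or proved in the paper either), and it is not clear that the limit exists or that the $\liminf$ and $\limsup$ versions of $h_H$ coincide. So, as written, your proof is incomplete at exactly the step you flagged as ``deserving the most care''; closing it would require either proving that limit-existence lemma, or proving that the $\liminf$ and $\limsup$ definitions of $h_H$ agree (which is how the classical product formula is rescued), or finding a genuinely different argument for the lower bound.
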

\begin{proof}
By above lemma, for any $(a,b),~(c,d)\in X\times Y$ and any $n>0$,
$$
\gamma_H((F\times G)^n(a,b),(F\times G)^n(c,d))=\max\{d_H(F^n(a),F^n(c)), \rho_H(G^n(b),G^n(d))\}.
$$
Then
$$
\gamma_H^n((a,b),~(c,d))=\max\{d_H^n(a,c), \rho_H^n(b,d)\}.
$$

One hand, if $M$ is a Hausdorff metric $(n,\epsilon)$-spanning set of $X$ with respect to $F$ and $M'$ is a Hausdorff metric $(n,\epsilon)$-spanning set of $Y$ with respect to $F'$,
then $M\times M'$ is a Hausdorff metric $(n,\epsilon)$-spanning set of $X\times Y$ with respect to $F\times F'$, and consequently
$$
r_{H}(n,\epsilon,X\times Y,F\times F')\leq r_{H}(n,\epsilon,X,F)\cdot r_{H}(n,\epsilon,Y, F'),
$$
Therefor,
$$
h_H(G\times G',F\times F')\leq h_H(G, F)+h_H(G',F').
$$

On the other hand, if $E$ is a Hausdorff metric $(n,\epsilon)$-separated set of $X$ with respect to $F$ and $E'$ is a Hausdorff metric $(n,\epsilon)$-separated set of $Y$ with respect to $F'$,
then $E\times E'$ is a Hausdorff metric $(n,\epsilon)$-separated set of $X\times Y$ with respect to $F\times F'$, and consequently
$$
s_{H}(n,\epsilon,X\times Y,F\times F')\geq s_{H}(n,\epsilon,X,F)\cdot s_{H}(n,\epsilon,Y, F'),
$$
Therefor,
$$
h_H(G\times G',F\times F')\geq h_H(G, F)+h_H(G',F').
$$

Thus,
$$
h_H(G\times G',F\times F')=h_H(G, F)+h_H(G',F').
$$
\end{proof}

\section{Further discussions}

We will give an example to show that the Hausdorff metric entropy of a tuple $F$ can be positive when each element in $F$ has zero entropy. In fact, the following example is contained in a result of
Bi\'{s} and Urba\'{n}ski (Theorem 4.1 in \cite{BU}, where they discussed Bi\'{s}'s topological entropy of a semigroup).

\begin{example}\label{da}
Denote $X$ by the unit interval $[0,1]$. Let $F=\{f_1,f_2\}$, where $f_1, f_2: X\rightarrow X$ defined by
$$
f_1(x)=\left\{\begin{array}{cc}
x, \ \ \ &\mbox{if \ $0\leq x\leq 1/3$} \\
3x-2/3, \ \ \ &\mbox{if \ $1/3\leq x\leq 4/9$} \\
3x/5+2/5, \ \ \ &\mbox{if \ $4/9\leq x\leq 1$}
\end{array}\right.
$$
and
$$
f_2(x)=\left\{\begin{array}{cc}
3x/5, \ \ \ &\mbox{if \ $0\leq x\leq 5/9$} \\
3x-4/3, \ \ \ &\mbox{if \ $5/9\leq x\leq 2/3$} \\
x, \ \ \ &\mbox{if \ $2/3\leq x\leq 1$}
\end{array}\right.
$$
Then
$$
h(f_1)=h(f_2)=0 \  \ and \ \ h_H(G,F)\geq \log2>0.
$$
\end{example}

Denote $Y$ the subinterval $[1/3,2/3]$. Then $f_1^{-1}(Y)=[1/3,4/9]\subseteq Y$,  $f_2^{-1}(Y)=[5/9,2/3]\subseteq Y$ and $d(f_1^{-1}(Y), f_2^{-1}(Y))=1/9$.
Fix $\epsilon\in(0,1/15)$. Since every map $g: X\rightarrow X$, $g\in G$, is a homeomorphism, one can select for every $g\in G$
exactly one point $z_g\in g^{-1}(Y)$. For every $n\geq0$, consider the set $A_n=\{z_g; g\in F^n\}$. We will show that $A_n$ is a Hausdorff metric
$(n,\epsilon)$-separated set consisting of exactly $2^n$ elements.

Now it suffices to prove that, for two arbitrary elements $g\neq h$ in $F^n$, $d_H^n(z_g,z_h)\geq1/15$. Write $g=g_n\circ g_{n-1}\circ\cdots\circ g_1$ and
$h=h_n\circ h_{n-1}\circ\cdots\circ h_1$, where $g_j, h_j\in \{f_1,f_2\}$ for all $j=1,2,\ldots,n$. Since $g\neq h$, there exists $k\in\{1,2,\ldots,n\}$ such that
$g_1=h_1$, $g_2=h_2$, $\ldots$, $g_{k-1}=h_{k-1}$ and $g_k\neq h_k$. Then
$$
g_{k-1}\circ\cdots\circ g_1(z_g)\in g_k^{-1}(Y)
$$
and
$$
g_{k-1}\circ\cdots\circ g_1(z_h)\in h_k^{-1}(Y).
$$
Hence,
$$
d(g_{k-1}\circ\cdots\circ g_1(z_g), g_{k-1}\circ\cdots\circ g_1(z_h)\geq\frac{1}{9}>\frac{1}{15}>\epsilon.
$$
Given any $w=w_n\circ w_{n-1}\circ\cdots\circ w_1\in F^n$, where $w_j\in \{f_1,f_2\}$ for all $j=1,2,\ldots,n$. There are two cases as follows.

Case 1. $g_1=w_1$, $g_2=w_2$, $\ldots$, $g_{k-1}=w_{k-1}$. Then
$$
w_{k-1}\circ\cdots\circ w_1(z_g)=g_{k-1}\circ\cdots\circ g_1(z_g)
$$
and
$$
w_{k-1}\circ\cdots\circ w_1(z_h)=h_{k-1}\circ\cdots\circ h_1(z_h).
$$

Case 2. there exists $t\in\{1,2,\ldots,k-1\}$ such that
$g_1=w_1$, $g_2=w_2$, $\ldots$, $g_{t-1}=w_{t-1}$ and $g_t\neq w_t$. Then
$$
g_{t-1}\circ\cdots\circ g_1(z_g)\in g_t^{-1}(Y) \ and \ g_{t-1}\circ\cdots\circ g_1(z_h)\in g_t^{-1}(Y),
$$
and consequently
$$
w_t\circ g_{t-1}\circ\cdots\circ g_1(z_g), \ w_t\circ g_{t-1}\circ\cdots\circ g_1(z_h)\in [\frac{11}{15}, \frac{4}{5}]\bigcup [\frac{1}{5}, \frac{4}{15}].
$$
Furthermore,
$$
w_{k-1}\circ\cdots\circ w_t\circ w_{t-1}\circ\cdots\circ w_1(z_g), w_{k-1}\circ\cdots\circ w_t\circ w_{t-1}\circ\cdots\circ w_1(z_h)\in [\frac{11}{15}, 1]\bigcup [0, \frac{4}{15}].
$$
This implies
$$
d(g_{k-1}\circ\cdots\circ g_1(z_g), w_{k-1}\circ\cdots\circ w_1(z_h)\geq d(Y, [\frac{11}{15}, 1]\bigcup [0, \frac{4}{15}])=\frac{1}{15}>\epsilon.
$$
and
$$
d(g_{k-1}\circ\cdots\circ g_1(z_h), w_{k-1}\circ\cdots\circ w_1(z_g)\geq d(Y, [\frac{11}{15}, 1]\bigcup [0, \frac{4}{15}])=\frac{1}{15}>\epsilon.
$$

Therefor,
$$
d_{H}(F^{k-1}(z_g), F^{k-1}(z_h))>\epsilon,
$$
which implies that $z_g$ and $z_h$ are Hausdorff metric $(n,\epsilon)$-separated. Notice that the map $g\rightarrow z_g$ is bijective, then
$$
s_H(n, \epsilon, X, F)\geq Card(F^n)=2^n.
$$
In consequence,
$$
h_H(G,F)\geq \log2>0.
$$

Now we will discuss the relation between Hausdorff metric entropy and Bi\'{s}'s topological entropy of a semigroup.  Let us review the concept of the topological entropy $h(G,F)$ defined by Bi\'{s} in \cite{Bis}. Let $(X,d)$ be a compact metric space and $G$ be a semigroup generated by a finite set $F=\{f_1,f_2,\ldots,f_p\}$. In \cite{Bis}, Bi\'{s} assumed the identity map is in $F$. We will show an equivalent version of this definition without the assumption.
Following \cite{Ghys} we will say that two points $x,y\in X$ are $(n, \epsilon)$-separated by G (with respect to the metric $d_{max}^n$) if there exists $g\in F^n$ such that $d(g(x), g(y))\geq\epsilon$, e.g.
$$
d_{max}^n(x,y)=\max\{d^k(x,y); 0\leq k\leq n\}\geq\epsilon,
$$
where
$$
d^k(x,y)=\max\{d(g(x),g(y)); g\in F^k\}.
$$

A subset $A$ of $X$ is $(n, \epsilon)$-separated if any two distinct points of $A$ have this property. Write
$$
s(n, \epsilon, X)=\max\{Card(A); A \ is \ an \ (n, \epsilon)-separated \ subest \ of \ X\},
$$
and consequently, define
$$
h(G,F)=\lim\limits_{\epsilon\rightarrow0^+}\limsup\limits_{n\rightarrow+\infty}\frac{1}{n}\log s(n, \epsilon, X).
$$

The topological entropy $h(G,F)$ also can be described by $(n, \epsilon)$-spanning subset. A subset $B$ of $X$ is called  $(n, \epsilon)$-spanning if for any $x\in X$, there exists a point $a\in A$ such that
$$
d_{max}^n(x,a)<\epsilon.
$$
Write
$$
r(n, \epsilon, X)=\min\{Card(A); A \ is \ an \ (n, \epsilon)-spanning \ subest \ of \ X\}.
$$
One can see
$$
h(G,F)=\lim\limits_{\epsilon\rightarrow0^+}\limsup\limits_{n\rightarrow+\infty}\frac{1}{n}\log r(n, \epsilon, X).
$$

Notice that for any $x,y\in X$ and $k\in \mathbb{N}$,
$$
d_H(F^k(x),F^k(y))\leq d^k(x,y)
$$
and consequently
$$
d_H^k(x,y)\leq d_{max}^k(x,y).
$$
Then an $(n, \epsilon)$-spanning subset must be a Hausdorff metric $(n, \epsilon)$-spanning subset. Thus, we have the following result.
\begin{proposition}\label{HB}
Let $(X,d)$ be a compact metric space and $G$ be a semigroup generated by a finite set $F=\{f_1,f_2,\ldots,f_p\}$. Then
$$
h_H(G,F)\leq h(G,F).
$$
\end{proposition}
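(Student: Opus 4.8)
The plan is to compare the two metrics $d_H^n$ and $d_{max}^n$ directly and then transfer spanning sets from Bi\'s's setting into the set-valued setting. The decisive observation, already recorded just before the statement, is the pointwise bound $d_H(F^k(x),F^k(y)) \le d^k(x,y)$ for every $k$ and every pair $x,y \in X$; once this is in hand, everything else is a routine passage to the limit.

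First I would establish this bound by unwinding the definition of the Hausdorff metric. Fix $k$ and $x,y$. To estimate $dist(F^k(x),F^k(y)) = \sup_{a \in F^k(x)}\inf_{b \in F^k(y)} d(a,b)$, note that every element of $F^k(x)$ has the form $g(x)$ for some word $g \in F^k$, and that the matching element $g(y)$ lies in $F^k(y)$. Hence $\inf_{b \in F^k(y)} d(g(x),b) \le d(g(x),g(y)) \le \max_{g' \in F^k} d(g'(x),g'(y)) = d^k(x,y)$, and taking the supremum over $g$ gives $dist(F^k(x),F^k(y)) \le d^k(x,y)$. By the symmetric argument the reverse one-sided distance is bounded the same way, so $d_H(F^k(x),F^k(y)) \le d^k(x,y)$. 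Taking the maximum over $0 \le k \le n$ then yields $d_H^n(x,y) \le d_{max}^n(x,y)$.

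With the metric comparison in place, the remaining step is to compare the spanning numbers. If $B$ is an $(n,\epsilon)$-spanning subset of $X$ in Bi\'s's sense, then for each $x \in X$ there is $a \in B$ with $d_{max}^n(x,a) < \epsilon$, whence $d_H^n(x,a) \le d_{max}^n(x,a) < \epsilon$; thus the very same $B$ is a Hausdorff metric $(n,\epsilon)$-spanning set, giving $r_H(n,\epsilon,X,F) \le r(n,\epsilon,X)$. Finally I would apply $\frac{1}{n}\log$, pass to the $\limsup$ in $n$, and let $\epsilon \to 0^+$ to conclude $h_H(G,F) \le h(G,F)$.

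There is no serious obstacle here: the proposition is essentially a formal consequence of the metric inequality. The only point that demands genuine care is the verification of $d_H(F^k(x),F^k(y)) \le d^k(x,y)$, where one must remember that $d^k$ is the maximum of $d(g(x),g(y))$ over a single word $g$, so that the pairing $g(x) \leftrightarrow g(y)$ controls both one-sided Hausdorff distances simultaneously. The inequality can be strict precisely because the Hausdorff metric is free to match each point of $F^k(x)$ with its nearest neighbour in $F^k(y)$ rather than with its own word-image, which is exactly why one obtains the inequality $h_H(G,F) \le h(G,F)$ rather than equality.
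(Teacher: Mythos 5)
Your proposal is correct and follows essentially the same route as the paper: the key inequality $d_H(F^k(x),F^k(y))\leq d^k(x,y)$ (hence $d_H^n\leq d_{max}^n$), followed by the observation that every $(n,\epsilon)$-spanning set in Bi\'{s}'s sense is a Hausdorff metric $(n,\epsilon)$-spanning set, so $r_H(n,\epsilon,X,F)\leq r(n,\epsilon,X)$ and the entropies compare in the limit. The only difference is that you supply the verification of the pointwise Hausdorff bound (pairing $g(x)$ with $g(y)$ for each word $g\in F^k$), which the paper asserts without proof.
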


The following example will tell us that $h_H(G,F)$ could be strictly less than $h(G,F)$. In fact, it shows that the Hausdorff metric entropy of a tuple $F$ can be zero when there exists an element in $F$ with positive entropy.

\begin{example}\label{xiao}
Let $f$ be a minimal homeomorphism with positive topological entropy on a compact metric space $X$. Let $F=\{f, id_X\}$
and $G$ be the semigroup generated by $F$. Then $h_H(G,F)=0$.
\end{example}

The theorem of Jewett-Krieger \cite{Jew, Kri} assures the existence of minimal systems with positive topological entropy, more concrete examples can be found in \cite{Hahn, Gri, Rees, Beg}.
Now it suffices to prove the following result.

\begin{proposition}
Let $f$ be a minimal homeomorphism on a compact metric space $X$. Let $F=\{f, id_X\}$
and $G$ be the semigroup generated by $F$. Then $h_H(G,F)=0$.
\end{proposition}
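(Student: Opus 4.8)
The plan is to exploit the very special structure of $F^n$ when $F=\{f,id_X\}$. Since $id_X$ commutes with $f$, every composition $g_n\circ\cdots\circ g_1$ with $g_j\in\{f,id_X\}$ collapses to a single power $f^k$, where $k$ is the number of indices $j$ with $g_j=f$; as the $g_j$ range over all possible choices, $k$ runs through $0,1,\dots,n$. Hence $F^n=\{id_X,f,\dots,f^n\}$ and, for every $x\in X$,
$$
F^n(x)=\{x,f(x),\dots,f^n(x)\},
$$
the forward orbit segment of length $n+1$. So the entire question reduces to understanding how orbit segments of a minimal homeomorphism sit inside $X$ in the Hausdorff metric.

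The crucial ingredient, and the step I expect to be the main obstacle, is a uniform density statement: for every $\delta>0$ there exists $N$ such that for every $x\in X$ the segment $\{x,f(x),\dots,f^{N-1}(x)\}$ is $\delta$-dense in $X$. To prove it I would first show that for any nonempty open $U$ one has $\bigcup_{k\geq0}f^{-k}(U)=X$: the set $V=\bigcup_{k\geq0}f^{-k}(U)$ is open and satisfies $f^{-1}(V)\subseteq V$, so its complement $C$ is a closed set with $f(C)\subseteq C$; if $C$ were nonempty then $\bigcap_{m\geq0}f^m(C)$ would be a nonempty closed $f$-invariant proper subset of $X$, contradicting minimality. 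Compactness then yields a finite $N_U$ with $\bigcup_{k=0}^{N_U-1}f^{-k}(U)=X$, that is, every forward orbit meets $U$ within the first $N_U$ steps. Applying this to each ball of a finite $\delta$-net of $X$ and taking $N$ to be the largest of the resulting $N_U$ gives the uniform density claim.

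Once this is in hand the conclusion is short. Fix $\epsilon>0$ and choose $N$ so that every segment $\{x,\dots,f^{N-1}(x)\}$ is $\tfrac{\epsilon}{2}$-dense. Then for all $n\geq N-1$ and all $x,y$, both $F^n(x)$ and $F^n(y)$ are $\tfrac{\epsilon}{2}$-dense, so $d_H(F^n(x),F^n(y))\leq\tfrac{\epsilon}{2}$; consequently the only terms of $d_H^n(x,y)=\max_{0\leq i\leq n}d_H(F^i(x),F^i(y))$ that can exceed $\tfrac{\epsilon}{2}$ are those with $i\leq N-2$. Therefore any Hausdorff metric $(N-2,\tfrac{\epsilon}{2})$-spanning set $M$ remains a Hausdorff metric $(n,\epsilon)$-spanning set for every $n\geq N-1$, which gives $r_H(n,\epsilon,X,F)\leq Card(M)=r_H(N-2,\tfrac{\epsilon}{2},X,F)$, a bound independent of $n$. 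Hence $\limsup_{n\to\infty}\tfrac{1}{n}\log r_H(n,\epsilon,X,F)=0$ for every $\epsilon>0$, and letting $\epsilon\to0^+$ together with $h_H(G,F)\geq0$ forces $h_H(G,F)=0$.
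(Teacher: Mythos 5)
Your proof is correct and takes essentially the same route as the paper: identify $F^n(x)$ with the orbit segment $\{x,f(x),\dots,f^n(x)\}$, use minimality plus compactness to make orbit segments uniformly $\tfrac{\epsilon}{2}$-dense in $X$, and conclude that the spanning numbers $r_H(n,\epsilon,X,F)$ are bounded independently of $n$. The only minor difference is how the uniform density is obtained --- you extract a finite subcover from $\{f^{-k}(U)\}_{k\geq 0}$ for each ball $U$ of a finite $\delta$-net, while the paper appeals to the continuity of $\widetilde{F}$ together with the compactness of $X$; both are routine compactness arguments and the remainder of the two proofs coincides.
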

\begin{proof}
Notice that for any $x\in X$ and any $k\in\mathbb{N}$,
$$
F^{k}(x)=\{x, f(x),\ldots, f^{k}(x)\}
$$
and for any $n\geq k$,
$$
F^n(x)\supseteq F^k(x).
$$

Given any $\epsilon>0$. For each $x\in X$, by the minimality of $f$, there exists a positive integer $N_x$ such that $F^{N_x}(x)$ is an $\epsilon/2$-net, i.e.,
$$
d_H(F^{N_x}(x), X)<\frac{\epsilon}{2}.
$$
According to the compactness of $X$ and the continuity of $\widetilde{F}$, there  exists a positive integer $N$ such that
$$
d_H(F^{N}(x), X)<\frac{\epsilon}{2}, \ \ \ for \ any \ x\in X.
$$
Furthermore, for any $x,y\in X$ and $n\geq N$,
$$
d_H(F^{n}(x), F^{n}(y))<\epsilon.
$$
Now one can see that a Hausdorff metric $(N,\epsilon)$-spanning set is also a Hausdorff metric $(n,\epsilon)$-spanning set for every $n\geq N$. Thus
$$
h_H(G,F)=\lim\limits_{\epsilon\rightarrow 0^+}\limsup\limits_{n\rightarrow +\infty}\frac{1}{n}r_H(n, \epsilon, X, F)=0.
$$
\end{proof}

\begin{remark}\label{rel}
Following from \cite{Bis}, for any semigroup $G$ generated by a finite set $F$,
$$
h(G,F)\geq \max \{h(f_i); f_i\in F\}.
$$
One may ask "What is the relation between the Hausdorff metric entropy of $F$ and $\max \{h(f_i); f_i\in F\}$?" Proposition \ref{deng}, Example \ref{da} and Example \ref{xiao} show that $h_H(G,F)$ are possible to be less than, equal to, or more than $\max \{h(f_i); f_i\in F\}$.
\end{remark}
\section{Some remarks and problems}

From a set-valued view, we can also well generalize Li-Yorke chaos (\cite{Li}) and distributional chaos (There are three versions of distributional chaos denoted by DC1,
DC2 and DC3 in brief. DC1 was originally introduced in \cite{Sch},
and the generalizations DC2 and DC3 were introduced in
\cite{BSS, S-S2004}.).

\begin{definition}
Let $(X,d)$ be a compact metric space and $G$ be a semigroup generated by a finite set $F=\{f_1,f_2,\ldots,f_p\}$.
$\{x,y\}\subset X$ is called a Hausdorff metric Li-Yorke pair, if
\begin{equation*}
\limsup\limits_{n\rightarrow+\infty}d_H(F^{n}(x),F^{n}(y))>0 \ \ and \ \
\liminf\limits_{n\rightarrow+\infty}d_H(F^{n}(x),F^{n}(y))=0.
\end{equation*}
Moreover, if there exists an uncountable subset $S$ of $X$ such that for any distinct points $x,y\in S$, $\{x,y\}$ is a Hausdorff metric Li-Yorke pair,
the system $(X,F)$ is called Hausdorff metric Li-Yorke chaotic and the set $S$ is called a Hausdorff metric scrambled set of $X$ with respect to $F$.
\end{definition}

Let $(X,d)$ be a compact metric space and $G$ be a semigroup generated by a finite set $F=\{f_1,f_2,\ldots,f_p\}$.
For any $\{x,y\}\subset X$ and any $n\in
\mathbb{N}$, define distributional function
$\phi^{n}_{xy}(F,\cdot):\mathbb{R}^+\rightarrow [0,1]$
by
\begin{equation*}
\phi^{n}_{xy}(F,t)=\frac{1}{n}Card\{0\leq i\leq n-1:
d_H(F^{i}(x),F^{i}(y))<t\}.
\end{equation*}

Let
\begin{align*}
\phi_{xy}(F,t)=\liminf\limits_{n\rightarrow\infty}\phi^{n}_{xy}(F,t), \\
\phi_{xy}^{*}(F,t)=\limsup\limits_{n\rightarrow\infty}\phi^{n}_{xy}(F,t).
\end{align*}
Then we call $\phi_{xy}(F,t)$ and $\phi_{xy}^{*}(F,t)$ are lower distributional function and upper distributional function generated by $F,x,y$ respectively.

\begin{definition}
Let $(X,d)$ be a compact metric space and $G$ be a semigroup generated by a finite set $F=\{f_1,f_2,\ldots,f_p\}$.
A pair $\{x,y\}\subset X$ is called Hausdorff metric distributionally chaotic of type
$k\in\{1,2,3\}$ (briefly, HDC1, HDC2 and DC3, respectively), if it
satisfies condition $(k)$ as follows

(1)$\phi_{xy}^{*}\equiv 1$ and $\exists \ \tau_0>0, \
\phi_{xy}(F,\tau_0)=0.$

(2)$ \phi_{xy}^{*}\equiv 1$ and $\phi_{xy}^{*}>\phi_{xy}.$

(3)$\phi_{xy}^{*}>\phi_{xy}.$

Furthermore, $F$ is called Hausdorff metric distributionally chaotic of type
$k\in\{1,2,3\}$, if there exists an uncountable subset $D\subseteq
X$ such that each pair of two distinct points is a Hausdorff metric distributionally
chaotic pair of type $k$. Moreover, $D$ is called a Hausdorff metric distributionally
scrambled set of type $k$.
\end{definition}

For a single continuous map, Blanchard et al. \cite{Bla} used
ergodic methods to prove that positive
entropy implies Li-Yorke chaos, later, Kerr and Li gave a combinatorial proof \cite{Kerr} to this result; T. Downarowicz \cite{Down}
proved that positive
entropy implies distributional chaos of type 2. Now we may ask "What is the relationship between positive Hausdorff metric entropy and Hausdorff metric chaos?"

\begin{problem}
Let $(X,d)$ be a compact metric space and $G$ be a semigroup generated by a finite set $F=\{f_1,f_2,\ldots,f_p\}$. Does
positive entropy imply Hausdorff metric Li-Yorke chaos for $F$?
\end{problem}

\begin{problem}
Let $(X,d)$ be a compact metric space and $G$ be a semigroup generated by a finite set $F=\{f_1,f_2,\ldots,f_p\}$. Does
positive entropy imply Hausdorff metric distributional chaos of type 2 for $F$?
\end{problem}
The answers seems to be positive. However, we are not able to simply use the methods appeared in \cite{Bla, Kerr, Down} to solve them.

As well-known, for a homeomorphism $f:X\rightarrow X$, the equality $h(f^{-1})=h(f)$ holds. Then how about the Hausdorff metric entropy? (This question for topological entropy of a semigroup defined by Bi\'{s} is also unsolved.)
Let $(X,d)$ be a compact metric space and $G$ be a semigroup generated by a finite set $F=\{f_1,f_2,\ldots,f_p\}$. If $f_j$ is invertible for all $1\leq j\leq p$, let $F^{-1}=\{f_1^{-1},f_2^{-1},\ldots,f_p^{-1}\}$ and $G^{-1}$ be the semigroup generated by $F^{-1}$.
\begin{problem}
Let $(X,d)$ be a compact metric space and $G$ be a semigroup generated by a finite set $F=\{f_1,f_2,\ldots,f_p\}$, where $f_j$ is invertible for all $1\leq j\leq p$. Does $h_H(G^{-1}, F^{-1})=h_H(G,F)$
hold?
\end{problem}

\begin{problem}
What are the relations between the Hausdorff metric entropy and other entropy-like invariants?
\end{problem}
From Proposition \ref{HB} and Remark \ref{rel}, we have known the relations between $h_H(G,F)$, $h(G,F)$ and $\max \{h(f_i); f_i\in F\}$.

\end{document}